\documentclass[a4paper,reqno]{amsart}

\usepackage{geometry}
\title{Diffeological Spaces with a Non-Smooth Derivation}
\author{Masaki Taho}                                                                               

\makeatletter
\@namedef{subjclassname@2020}{\textup{2020} Mathematics Subject Classification}
\makeatother
\subjclass[2020]{Primary~57P05; Secondary~58A05}

\keywords{diffeology, tangent space}

\address{GRADUATE SCHOOL OF MATHEMATICAL SCIENCES, THE
UNIVERSITY OF TOKYO, 3-8-1 KOMABA, MEGURO-KU, TOKYO, 153-8914,
JAPAN}
\email{taho@ms.u-tokyo.ac.jp}

\usepackage{amsmath} 
\usepackage{amssymb} 
\usepackage{amsfonts} 
\usepackage{mathrsfs} 
\usepackage{amsthm} 
\usepackage{bm} 
\usepackage[new]{old-arrows} 
\usepackage[dvipdfmx]{graphicx} 
\usepackage{wrapfig} 
\usepackage{color} 
\usepackage[labelsep=colon]{caption} 
\usepackage[labelformat=empty,subrefformat=parens]{subcaption} 

\usepackage{spalign}

\usepackage{multicol} 
\usepackage{units} 
\usepackage[all]{xy} 
\usepackage{tikz-cd} 
\usepackage[backref]{hyperref} 
\hypersetup{
    colorlinks=true,
    linkcolor= [rgb]{0.3,0.55,0.25},
    citecolor=[rgb]{0.164,0.39,0.77},
    urlcolor=[rgb]{0,0,1}
}

\usepackage[nameinlink,capitalize,noabbrev]{cleveref}

\crefname{thm}{theorem}{theorems}
\Crefname{thm}{Theorem}{Theorems}
\crefname{prop}{proposition}{propositions}
\Crefname{prop}{Proposition}{Propositions}
\crefname{lemma}{lemma}{lemmas}
\Crefname{lemma}{Lemma}{Lemmas}
\crefname{cor}{corollary}{corollaries}
\Crefname{cor}{Corollary}{Corollaries}
\crefname{defi}{definition}{definitions}
\Crefname{defi}{Definition}{Definitions}
\crefname{ex}{example}{examples}
\Crefname{ex}{Example}{Examples}
\crefname{remark}{remark}{remarks}
\Crefname{remark}{Remark}{Remarks}
\crefname{q}{question}{questions}
\Crefname{q}{Question}{Questions}
\crefname{conj}{conjecture}{conjectures}
\Crefname{conj}{Conjecture}{Conjectures}
\crefname{fact}{fact}{facts}
\Crefname{fact}{Fact}{Facts}

\crefname{equation}{equation}{equations}
\Crefname{equation}{Equation}{Equations}
\crefname{figure}{figure}{figures}
\Crefname{figure}{Figure}{Figures}
\crefname{table}{table}{tables}
\Crefname{table}{Table}{Tables}
\crefname{section}{section}{sections}
\Crefname{section}{Section}{Sections}

\usepackage{thmtools}
\usepackage{thm-restate}  

\newtheorem{thm}{Theorem}[section]
\newtheorem{prop}[thm]{Proposition}

\newtheorem*{mthm*}{Main Theorem}
\newtheorem*{thm*}{Theorem}
\newtheorem*{prop*}{Proposition}

\theoremstyle{definition}
\newtheorem{defi}[thm]{Definition}
\newtheorem{ex}[thm]{Example}
\newtheorem{remark}[thm]{Remark}

\newcommand{\dflg}{{\mathrm {Dflg}}}                 

\newcommand{\plotdom}[1]{U_{#1}}                      

\newcommand{\ad}{\operatorname{ad}}              

\newcommand{\id}{\operatorname{id}}

\newcommand{\colim}{\operatorname{colim}}

\newcommand{\germ}{G}                                

\newcommand{\righttangent}[2]{\hat{T}^R_{#1} (#2)}

\newcommand{\external}[2]{\hat{T}_{#1} (#2)}

\newcommand{\Hom}{\mathrm{Hom}}

\begin{document}

\begin{abstract}
    We show that on certain diffeological spaces there exist linear derivations that satisfy the Leibniz rule but are not smooth with respect to the given diffeology.
    This reveals that the notion of tangent space defined via all such derivations is strictly larger than the one defined using only smooth derivations, showing that smoothness cannot be recovered from the Leibniz rule alone.
\end{abstract}

\maketitle

\section{Introduction}
To study smooth structures beyond manifolds, Souriau proposed the concept of ``diffeologies'' in his 1980 paper \cite{Souriau}.
This led to the notion of diffeological spaces, extending the category of smooth manifolds to include quotients, mapping spaces,
and other non-manifold objects while preserving a reasonable concept of smooth maps; see, e.g., Baez--Hoffnung~\cite{Baez}.

Several notions of tangent spaces for diffeological spaces have been introduced by Iglesias-Zemmour~\cite{IZ}, Vincent~\cite{vincent}, and Christensen-Wu~\cite{CW}.  
This paper focuses on the \emph{external tangent space} $\external{x}{X}$ introduced in~\cite{CW}, 
which is the vector space of smooth derivations
\[
D \colon \germ(X,x)\to \mathbb{R},
\]
i.e., smooth $\mathbb{R}$-linear maps satisfying the Leibniz rule.
The version without the smoothness requirement, called the \emph{right tangent space} $\righttangent{x}{X}$, was studied in~\cite{taho}, 
where the corresponding functor was shown to coincide with the right Kan extension of the classical tangent functor.   
However, as pointed out in~\cite[3.4.2]{CW} and~\cite{taho}, 
it remained unknown whether these two constructions can differ for some diffeological space.

In this paper we show that they do differ for certain diffeological spaces. 
A concrete example is given by the bouquet of countably many copies of~$\mathbb{R}$. 
At the wedge point, the external tangent space of smooth derivations is countably infinite-dimensional, 
while the right tangent space is uncountably infinite-dimensional (throughout the paper, $\mathbb{N}$ denotes the set of positive integers).

\begin{restatable*}{thm}{maintheoremone}\label{thm:mainone}
  Let $X=\bigvee_{n\in\mathbb{N}} \mathbb{R}$ be the bouquet of countably many copies of~$\mathbb{R}$ equipped with the quotient diffeology, 
  and let $0\in X$ be the wedge point.  
  Then we have isomorphisms of vector spaces
  \[
  \external{0}{X}\cong \bigoplus_{n\in\mathbb{N}} \mathbb{R},
  \qquad
  \righttangent{0}{X}\cong \Hom_{\mathbb{R}}\!\left(\prod_{n\in\mathbb{N}}\mathbb{R},\mathbb{R}\right).
  \]
  In particular,
  \[
  \dim_{\mathbb{R}} \external{0}{X} = \aleph_0
  \qquad\text{and}\qquad
  \dim_{\mathbb{R}} \righttangent{0}{X} > \aleph_0.
  \]
  Hence the external and right tangent spaces do not coincide.
\end{restatable*}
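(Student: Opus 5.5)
Both computations run through the germ algebra $\germ(X,0)$. A germ at the wedge point is a family of germs $(f_n)_{n\in\mathbb{N}}$ of smooth functions $f_n\colon\mathbb{R}\to\mathbb{R}$ at $0$ with a common value $f_n(0)=c$. Because $X$ carries the quotient diffeology, a function on $X$ is smooth exactly when each of its restrictions to the copies of $\mathbb{R}$ is smooth. The one subtlety is that the germ must be taken with respect to the D-topology, and a D-open neighbourhood of $0$ may shrink arbitrarily on each branch. So $\germ(X,0)$ should be the set of compatible families of branch germs, i.e.\ $\germ(X,0)\cong\{(g_n)\in\prod_n \germ(\mathbb{R},0) : g_n(0)\ \text{all equal}\}$.

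\textbf{The right tangent space.} A derivation $D$ kills constants, so it is determined on the maximal ideal $\mathfrak m=\prod_n \mathfrak m_n$, where $\mathfrak m_n$ is the maximal ideal of $\germ(\mathbb{R},0)$. The Leibniz rule at $0$ forces $D(\mathfrak m^2)=0$, hence $\righttangent{0}{X}=\Hom_{\mathbb{R}}(\mathfrak m/\mathfrak m^2,\mathbb{R})$. Next I identify $\mathfrak m^2=\prod_n\mathfrak m_n^2$. One inclusion is clear. For the other, by Hadamard's lemma each $h_n\in\mathfrak m_n^2$ has the form $x^2k_n=x\cdot(xk_n)$, so $(h_n)=(x)_n\cdot(xk_n)_n$ is a single product. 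Hence $\mathfrak m/\mathfrak m^2\cong\prod_n \mathfrak m_n/\mathfrak m_n^2\cong\prod_n\mathbb{R}$ via $(f_n)\mapsto(f_n'(0))$. This gives $\righttangent{0}{X}\cong\Hom_{\mathbb{R}}(\prod_n\mathbb{R},\mathbb{R})$. By Erdős–Kaplansky, $\prod_n\mathbb{R}$ has dimension $2^{\aleph_0}$, and its algebraic dual has dimension $2^{2^{\aleph_0}}>\aleph_0$.

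\textbf{The external tangent space.} Smooth derivations form a subspace of the above. The functionals $(a_n)\mapsto\sum_n c_na_n$ with only finitely many $c_n\neq 0$ are smooth, since they are finite combinations of the smooth maps $f\mapsto f_n'(0)$ obtained by pulling back along the inclusion of the $n$-th branch. The key step, and the main obstacle, is the converse: a smooth linear $D$ vanishes on all but finitely many coordinates and factors through the finite projection.

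I would argue as follows. Suppose $\lambda_n:=D(e_n)\neq0$ for infinitely many $n$, where $e_n$ is the germ equal to $x$ on branch $n$ and $0$ elsewhere. Build a smooth plot $t\mapsto F_t\in\germ(X,0)$ with $F_t=\sum_n \phi_n(t)\,\lambda_n^{-1}e_n$, where the $\phi_n$ are smooth bump functions in $t$ with disjoint supports accumulating at $t=0$ (for example supported near $1/n$) and flat at $0$. This plot is smooth in the functional diffeology on germs, because on each branch only one term is nonzero near each $t$, and the parametrized family is smooth in $(t,x)$ branchwise. However, $D(F_t)=\phi_n(t)$ near $1/n$, and I choose these bumps to have height $1$. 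Then $t\mapsto D(F_t)$ is not continuous at $0$, which contradicts smoothness.

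The delicate point is to check that this family really is a plot in the paper's diffeology on $\germ(X,0)$. One must make sure that the infinitely many branches do not require a uniform domain. If such a requirement is imposed, I would instead use heights decaying like $1/n$ but with arbitrarily large derivatives in $t$, and contradict smoothness rather than continuity.

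Once finite support is established, I still need $D$ to equal the finite combination. Subtracting that combination leaves a smooth $D'$ vanishing on every $e_n$; it kills finite sums of the $e_n$ and hence all of $\bigoplus_n\mathbb{R}$. It remains to show $D'=0$ on all of $\prod_n\mathbb{R}$, using a path $t\mapsto$ (finite truncations) that converges smoothly to a given element, so that $D'$ vanishes there by continuity. This yields $\external{0}{X}\cong\bigoplus_n\mathbb{R}$, of dimension $\aleph_0$, and the two spaces differ.
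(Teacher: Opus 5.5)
Your proof is correct, but you organize the key step (finiteness for smooth derivations) differently from the paper.

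\textbf{The paper's route.} It first proves that $\Phi\colon I_0(X)/I_0^2(X)\to\prod_n\mathbb{R}$ is a diffeomorphism and reduces to smooth linear functionals on $\prod_n\mathbb{R}$. A single contradiction argument then shows that such a functional kills a whole tail $\ker\pi_N$. It takes arbitrary $x_k\in\ker\pi_k$ with $D(x_k)=k$ and the plot $p(u)_n=\sum_{k\le n}(x_k)_n\varphi_k(u)$. This is a plot exactly because $x_k\in\ker\pi_k$ makes each coordinate a finite sum.

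\textbf{Your route.} You stay on $\germ(X,0)$ and test only against the coordinate germs $e_n$, which needs a simpler plot. You then add a separate density step: a smooth linear functional vanishing on $\bigoplus_n\mathbb{R}$ vanishes on $\prod_n\mathbb{R}$. That step works, but you should make it explicit. For $a\in\prod_n\mathbb{R}$ and $t_k\downarrow 0$, let $c(t)_n=\psi_n(t)a_n$, where $\psi_1\equiv 1$ and, for $n\ge 2$, $\psi_n\equiv 1$ on $(-\infty,t_n]$ and $\psi_n\equiv 0$ on $[t_{n-1},\infty)$. Then $c(0)=a$ and $c(t_k)=(a_1,\dots,a_k,0,\dots)$. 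On germs, use $\psi_n(t)f_n(x)$ on the $n$-th branch. Continuity of $D'\circ c$ then gives $D'(a)=0$.

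\textbf{The delicate point is harmless.} Take $B=X$. Every plot of $W\times X$ locally factors through a single branch, so smoothness of $\mathrm{ad}$ is checked branch by branch. No uniformity in $n$ is needed; this is the paper's argument for smoothness of $\Psi$. Your fallback with decaying heights is unnecessary.

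\textbf{What each route buys.} Your route gives a conceptual split: smooth linear functionals are determined by their values on $\bigoplus_n\mathbb{R}$. This is exactly what the algebraic functionals vanishing on $\bigoplus_n\mathbb{R}$ but not on $\prod_n\mathbb{R}$ violate. It also avoids establishing the diffeomorphism $I_0(X)/I_0^2(X)\cong\prod_n\mathbb{R}$ and the appeal to Proposition~3.11 of Christensen--Wu. You only need smoothness of each $f\mapsto f_n'(0)$ and your explicit plots. The paper's route needs no density lemma and yields a standalone statement about $L^\infty(\prod_n\mathbb{R},\mathbb{R})$.

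Your Hadamard single-product argument for $\mathfrak m^2=\prod_n\mathfrak m_n^2$ and the Erd\H{o}s--Kaplansky dimension count also fill in points the paper leaves to the reader.
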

  
This suggests that smoothness is a genuinely additional requirement. A derivation satisfying the Leibniz rule need not be smooth.


\section{Background on diffeological spaces and tangent spaces}\label{section:diffeology}

We briefly recall the notions of diffeological spaces and two constructions of tangent spaces for them.
For further details, see \cite{IZ}, \cite{CW}, \cite{vincent}, and \cite{taho}.

\subsection{Diffeological spaces}
\begin{defi}[{\cite[1.5]{IZ}}] \label{definition diffeology}
    Let $X$ be a set. A \emph{parametrization} of $X$ is a map $U\to X$ where $U$ is an open subset of $\mathbb{R}^n$ for some $n\in\mathbb{Z}_{\geq 0}$.
    A \emph{diffeology} on $X$ is a set of parametrizations $\mathscr{D}_X$ (we call an element of $\mathscr{D}_X$ a plot) such that the following three axioms are satisfied:
    \begin{description}
      \item[Covering] Every constant parametrization $U\to X$ is a plot.
      \item[Locality] Let $p\colon U\to X$ be a parametrization. If there is an open covering $\{U_{\alpha}\}$ of $U$ such that ${p|}_{U_{\alpha}}\in \mathscr{D}_X$ for all $\alpha$, then $p$ itself is a plot.  
      \item[Smooth compatibility] For every plot $p\colon U\to X$, every open set $V$ in $\mathbb{R}^m$ and $f\colon V\to U$ that is smooth as a map between Euclidean spaces, $p\circ f\colon V\to X$ is also a plot.
    \end{description}
    A diffeological space is a set equipped with a diffeology.
    We usually write $X$ for the diffeological space $(X,\mathscr{D}_X)$.
\end{defi}

\begin{defi}[{\cite[1.14]{IZ}}]
A map $f\colon X\to Y$ between diffeological spaces is \emph{smooth} if for every plot $p\colon U\to X$, the composition $f\circ p$ is a plot of $Y$.
\end{defi}

Diffeological spaces and smooth maps form a category $\dflg$ that contains smooth manifolds as a full subcategory.

\begin{ex}
If $M$ is a smooth manifold, the set of all smooth maps $U\to M$ from Euclidean open sets $U$ forms a diffeology, called the \emph{standard diffeology} on $M$.
\end{ex}

\begin{defi}[{\cite[2.8]{IZ}}]\label{def:D-topology}
  Let $X$ be a diffeological space. The \emph{$D$-topology} on $X$
  is the finest topology that makes all plots $p\colon U\to X$ continuous.
\end{defi}  

\begin{defi}[{\cite[1.55]{IZ}}] \label{def:product}
  Let $\{X_{\alpha}\}_{\alpha\in A}$ be a family of diffeological spaces. 
  The \emph{product diffeology} on $\prod_{\alpha\in A} X_{\alpha}$ is the diffeology whose plots are those maps $p\colon U\to\prod_{\alpha}X_{\alpha}$ for which $p_{\alpha}\circ p$ is a plot of $X_{\alpha}$ for every $\alpha\in A$, 
  where $p_{\alpha}\colon \prod_{\alpha\in A} X_{\alpha}\to X_{\alpha}$ is the canonical projection.
\end{defi}

\begin{defi}[{\cite[1.50]{IZ}}]\label{def:quotient}
Let $X$ be a diffeological space and $\sim$ an equivalence relation on $X$.  
The \emph{quotient diffeology} on $Y=X/\!\sim$ is the smallest diffeology (with respect to inclusion of plots) for which the quotient map $\pi\colon X\to Y$ is smooth.  
Equivalently, a parametrization $p\colon U\to Y$ is a plot if and only if each point of $U$ has a neighborhood $U'$ such that $p|_{U'}=\pi\circ q$ for some plot $q\colon U'\to X$.
\end{defi}

\begin{defi}[{\cite[1.39]{IZ}}]\label{def:coproduct}
  Let $\{X_i\}_{i\in I}$ be a family of diffeological spaces.  
  The \emph{coproduct diffeology} on the disjoint union $\bigsqcup_{i\in I} X_i$
  is the finest diffeology that makes each inclusion 
  \[
    \iota_i\colon X_i \hookrightarrow \bigsqcup_{i\in I} X_i
  \]
  smooth.  
  Equivalently, a map $p\colon U\to \bigsqcup_{i\in I} X_i$ is a plot
  if and only if for every $u\in U$ there exists a neighborhood $U'$ of $u$
  and an index $i\in I$ such that $p(U')\subset X_i$ and $p|_{U'}$ is a plot of $X_i$.
\end{defi}

\begin{defi}[{\cite[1.57]{IZ}}] \label{definition:functional}
  Let $X$ and $Y$ be diffeological spaces.  
  The \emph{functional diffeology} on $C^{\infty}(X,Y)$ is defined as follows: 
  a parametrization $p\colon \plotdom{p}\to C^{\infty}(X, Y)$ is a plot if and only if the map
  \[
    \ad(p)\colon \plotdom{p}\times X\to Y;(u,x)\mapsto p(u)(x)
  \]
  is smooth with respect to the product diffeology on $\plotdom{p}\times X$.
\end{defi}

\begin{ex}[Irrational tori, {\cite[Exercise~4]{IZ}}]\label{ex:irrational-torus}
  Let $\alpha\in\mathbb{R}\setminus\mathbb{Q}$.  
  The \emph{irrational torus} of slope $\alpha$ is the quotient diffeological space
  \[
    T_\alpha=\mathbb{R}/(\mathbb{Z}+\alpha\mathbb{Z})
  \]
  equipped with the quotient diffeology induced from $\mathbb{R}$.  
\end{ex}

\begin{ex}[Bouquet of lines]\label{ex:bouquet}
  Let $\{\mathbb{R}_n\}_{n\in\mathbb{N}}$ be countably many copies of $\mathbb{R}$,
  and define
  \[
    \bigvee_{n\in\mathbb{N}}\mathbb{R}_n
    =\left(\bigsqcup_{n\in\mathbb{N}}\mathbb{R}_n\right)\big/\bigl(0_n \sim 0_m \text{ for all } n,m\,\bigr),
  \]
  equipped with the quotient diffeology induced from the coproduct $\bigsqcup_{n\in\mathbb{N}}\mathbb{R}_n$ in \Cref{def:coproduct}. 
\end{ex}


\begin{ex}[Germ algebras, {\cite[paragraph before Definition~3.10]{CW}}]\label{ex:germ-algebra}
  Let $(X,x)$ be a based diffeological space.   
  The set of germs of smooth functions at~$x$ is defined as the colimit
  \[
    \germ(X,x) = \colim_{x\in B\subset X} C^{\infty}(B,\mathbb{R}),
  \]
  where the colimit ranges over all $D$-open neighborhoods $B$ of $x$.  
  The space $\germ(X,x)$ carries the quotient diffeology induced from the coproduct of the functional diffeologies on the spaces $C^{\infty}(B,\mathbb{R})$,  
  and becomes a diffeological $\mathbb{R}$-algebra under the pointwise operations.
\end{ex}

\medskip

\subsection{The external tangent spaces and the right tangent spaces}\label{subsec:external-right}

We next introduce the \emph{external tangent space} and the \emph{right tangent space}. 

\begin{defi} \label{def:external-right}
  Let $(X,x)$ be a based diffeological space and let $\germ(X,x)$ be the diffeological $\mathbb{R}$-algebra of germs of smooth functions at $x$ (\Cref{ex:germ-algebra}).
  A map $D\colon\germ(X,x)\to\mathbb{R}$ is said to be a \emph{right tangent vector} if it is linear and satisfies the Leibniz rule:
  \[
    D(gh)=g(x)D(h)+h(x)D(g)\quad \text{for all } g, h\in \germ(X,x). 
  \]
  The set of all right tangent vectors, denoted by $\righttangent{x}{X}$, forms a vector space called the \emph{right tangent space of $X$ at $x$}.

  An element $D\in\righttangent{x}{X}$ is further called an \emph{external tangent vector} if $D$ is smooth as a map of diffeological spaces.  
  The subspace of all external tangent vectors, denoted by $\external{x}{X}$, is called the \emph{external tangent space of $X$ at $x$} \cite[Definition~3.10]{CW}.
\end{defi}

\begin{remark}
  The right tangent space was introduced in~\cite{taho}, where the corresponding functor was shown to admit a characterization as the right Kan extension of the classical tangent functor.
  It was left open, however, whether the right and external tangent spaces always coincide.
  Already in~\cite[3.4.2]{CW}, it was remarked that it was unclear whether the smoothness of derivations follows automatically from the Leibniz rule.
\end{remark}

\begin{prop}[{\cite[Proposition 3.11]{CW}}]\label{prop:external-right-Ix}
  Let $(X,x)$ be a based diffeological space, and let 
  \[
    I_x(X)=\{f\in\germ(X,x)\mid f(x)=0\}
  \]
  be the kernel of the evaluation map $\germ(X,x)\to\mathbb{R}$ equipped with the subdiffeology from $\germ(X,x)$.
  \begin{enumerate}
    \item The following isomorphism holds: 
    \[
      \external{x}{X}\cong L^{\infty}(I_x(X)/I_x^2(X),\mathbb{R})=\{F\colon I_x(X)/I_x^2(X)\to\mathbb{R}\mid F \text{ is smooth and linear}\,\}.
    \]
    The isomorphism is given by
    \[
      \begin{aligned}
        \alpha&\colon \external{x}{X}\to L^{\infty}(I_x(X)/I_x^2(X),\mathbb{R}), & \alpha(D)([f])&=D([f]),\\
        \beta&\colon L^{\infty}(I_x(X)/I_x^2(X),\mathbb{R})\to \external{x}{X}, & \beta(F)([f])&=F([f]-[f(x)]).
      \end{aligned}
    \]
    \item The same construction, without the smoothness condition, yields a natural isomorphism
    \[
      \righttangent{x}{X} \cong L(I_x(X)/I_x^2(X),\mathbb{R})
      =\{F\colon I_x(X)/I_x^2(X)\to\mathbb{R}\mid F \text{ is linear}\,\}.
    \]
  \end{enumerate}
\end{prop}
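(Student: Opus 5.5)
The plan is to check directly that $\alpha$ and $\beta$ are well defined, land where claimed, and are mutually inverse; part~(2) will follow by the same argument with every smoothness verification omitted.

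\textbf{Well-definedness on the algebraic level.} Let $D$ be a derivation. Applying the Leibniz rule to $1\cdot 1$ gives $D(1)=2D(1)$, so $D$ vanishes on constants. For $f,g\in I_x(X)$ we get $D(fg)=f(x)D(g)+g(x)D(f)=0$. By linearity $D$ therefore vanishes on $I_x^2(X)$, the span of such products, and so induces a linear map $I_x(X)/I_x^2(X)\to\mathbb{R}$.

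Conversely, let $F$ be linear. The formula $\beta(F)(f)=F([f-f(x)])$ is linear in $f$. For the Leibniz rule, put $f_0=f-f(x)$ and $g_0=g-g(x)$. Then
\[
fg-f(x)g(x)=f(x)g_0+g(x)f_0+f_0g_0 ,
\]
and $f_0g_0\in I_x^2(X)$. Applying $F$ yields $\beta(F)(fg)=f(x)\beta(F)(g)+g(x)\beta(F)(f)$.

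\textbf{Mutually inverse.} For $[f]$ with $f\in I_x(X)$ we have $\alpha\beta(F)([f])=F([f])$, since $f(x)=0$. In the other direction, $\beta\alpha(D)(f)=D(f-f(x))=D(f)$, because $D$ kills constants. This already proves~(2), and naturality in $(X,x)$ is immediate from the formulas.

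\textbf{Smoothness, which is the main point for (1).} The quotient $I_x(X)/I_x^2(X)$ carries the quotient diffeology of the subdiffeology on $I_x(X)\subset\germ(X,x)$.

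For $\alpha$: if $D$ is smooth, then its restriction to $I_x(X)$ is smooth, since subdiffeology plots are plots of $\germ(X,x)$. This restriction is constant on cosets of $I_x^2(X)$. A map out of a quotient is smooth exactly when its composite with the projection is smooth (by \Cref{def:quotient}), so $\alpha(D)$ is smooth.

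For $\beta$: the map $\germ(X,x)\to I_x(X)$, $f\mapsto f-f(x)$, must be shown smooth. Evaluation at $x$ is smooth on $\germ(X,x)$: a plot locally lifts to a plot $p$ of some $C^\infty(B,\mathbb{R})$, and $u\mapsto p(u)(x)=\ad(p)(u,x)$ is smooth. Scalar multiplication of the constant germ $1$ is smooth, and subtraction in the diffeological algebra $\germ(X,x)$ is smooth. The resulting map lands in $I_x(X)$, hence is smooth into the subdiffeology. Composing it with the projection to the quotient and with $F$ gives smoothness of $\beta(F)$.

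The step I expect to require the most care is verifying that evaluation and subtraction are smooth on the colimit $\germ(X,x)$. This relies on the local lifting description of quotient and coproduct plots: a plot into $\germ(X,x)$ locally comes from a single $C^\infty(B,\mathbb{R})$. The standing assertion in \Cref{ex:germ-algebra} that $\germ(X,x)$ is a diffeological $\mathbb{R}$-algebra is taken as given.
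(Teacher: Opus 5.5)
Your proposal is correct and follows essentially the same route as the paper. The paper's proof only cites Christensen--Wu for (1) and says that (2) follows from the same argument with the smoothness verifications dropped. Your write-up makes that argument explicit: the algebraic bijection $\alpha,\beta$ gives (2), and the smoothness of $\alpha(D)$ and $\beta(F)$ upgrades it to (1), using the quotient and subdiffeology characterizations together with smoothness of evaluation at $x$ and of the algebra operations on $\germ(X,x)$.
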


\begin{proof}
  The first statement is proved in \cite[Proposition~3.11]{CW}. 
  The second statement follows by the same argument, omitting the smoothness assumption.
\end{proof}

\section{A diffeological space with non-smooth derivations}

In this section we present a concrete diffeological space for which 
the external and right tangent spaces do not coincide.  

\maintheoremone

\begin{proof}
  We first compute the right tangent space at the wedge point.
  For each $n$, denote by $\iota_n\colon\mathbb{R}_n\to X$ the canonical inclusion.

  By the universal property of the quotient diffeology, a germ 
  $[f]\in\germ(X,0)$ corresponds to a family of germs
  $([f_n])_{n\in\mathbb{N}}$ with
  $[f_n]\in\germ(\mathbb{R}_n,0)$ and $f_n(0)=f_m(0)$ for all $n,m$.
  Thus we obtain an isomorphism of $\mathbb{R}$-algebras
  \[
    \germ(X,0)\cong
    \Bigl\{(f_n)_n\in\prod_{n\in\mathbb{N}}\germ(\mathbb{R}_n,0)
      \Bigm| f_n(0)=f_m(0)\ \text{for all } n,m\in\mathbb{N}\Bigr\}.
  \]
  One readily checks that $I_0(X)$ corresponds to $\prod_n I_0(\mathbb{R}_n)$ and that
  $I_0(X)^2$ corresponds to $\prod_n I_0^2(\mathbb{R}_n)$. Hence we obtain
  \[
    I_0(X)/I_0^2(X)
    \cong
    \prod_{n\in\mathbb{N}} I_0(\mathbb{R}_n)/I_0^2(\mathbb{R}_n)\cong \prod_{n\in\mathbb{N}} T_0 (\mathbb{R}_n)\cong \prod_{n\in\mathbb{N}}\mathbb{R}.
  \]
  By \Cref{prop:external-right-Ix}\,\textup{(2)} we therefore have
  \[
    \righttangent{0}{X}
    \cong L\bigl(I_0(X)/I_0^2(X),\mathbb{R}\bigr)
    \cong L\Bigl(\prod_{n\in\mathbb{N}}\mathbb{R},\mathbb{R}\Bigr)
    = \Hom_{\mathbb{R}}\Bigl(\prod_{n\in\mathbb{N}}\mathbb{R},\mathbb{R}\Bigr).
  \]

  Next we compute the external tangent space at the wedge point.
  We write $[f]$ for the class of $f\in I_0(X)$ in $I_0(X)/I_0^2(X)$.  
  Writing out the isomorphism observed above, we have
  \[
  \Phi\colon I_0(X)/I_0^2(X)\to\prod_{n\in\mathbb{N}}\mathbb{R},
  \quad
  \Phi([f])
  =\Bigl(\left.\frac{d}{dt}\right|_{t=0}f(\iota_n(t))\Bigr)_{n\in\mathbb{N}},
  \]
  whose inverse is given by
  \[
  \Psi\bigl((a_n)_{n\in\mathbb{N}}\bigr)
  =[f],
  \qquad\text{where}\quad
  f(\iota_n(t))=a_n t.
  \]
  It is straightforward to check that $\Phi$ and $\Psi$ are well-defined and mutually inverse.  

  The maps $\Phi$ and $\Psi$ are in fact smooth as well.
  For $\Phi$, it suffices to show that each component $\Phi_n$ is smooth.
  Let $p:U\to I_0(X)/I_0^2(X)$ be a plot and fix $u_0\in U$.
  By the definitions of the diffeologies on $\germ(X,0)$ and on $I_0(X)/I_0^2(X)$,
  there exist a neighborhood $W\subset U$ of $u_0$, a $D$-open neighborhood $B\subset X$ of $0$,
  and a plot $F:W\to C^\infty(B,\mathbb{R})$ such that $p(u)=[F(u)]$ for $u\in W$.
  Then for $u\in W$,
  \[
  (\Phi_n\circ p)(u)
  =\left.\frac{\partial}{\partial t}\right|_{t=0}\ad(F)\bigl(u,\iota_n(t)\bigr).
  \]
  Set $B_n:=\iota_n^{-1}(B)$, an open neighborhood of $0$ in $\mathbb{R}_n$.
  Since $\ad(F):W\times B\to\mathbb{R}$ is smooth and $\id_W\times \iota_n|_{B_n}:W\times B_n\to W\times B$ is smooth,
  the composite $(u,t)\mapsto \ad(F)(u,\iota_n(t))$ is smooth near $(u_0,0)$.
  Hence $\Phi_n\circ p$ is smooth near $u_0$.
  Therefore $\Phi_n$ is smooth, and hence $\Phi$ is smooth.



  Conversely, for a plot $q\colon \plotdom{q}\to\prod_n\mathbb{R}$,
  define a smooth map
  \[
  \tilde q\colon \plotdom{q}\to C^{\infty}(X,\mathbb{R}),
  \qquad
  \tilde q(u)(\iota_n(t))=q_n(u)t.
  \]
  Consider the composite
  \[
  \plotdom{q}
  \xrightarrow{\;\tilde q\;}
  C^{\infty}(X,\mathbb{R})
  \longrightarrow
  \germ(X,0),
  \]
  where the second arrow is the canonical map to the germ algebra.  
  Its image lies in the ideal $I_0(X)\subset\germ(X,0)$,
  so we may regard this composite as a map $\plotdom{q}\to I_0(X)$.
  Then $\Psi\circ q$ is obtained by further composing this map with the quotient
  $I_0(X)\to I_0(X)/I_0^2(X)$.  
  Hence the composite $\Psi\circ q$ is a plot of $I_0(X)/I_0^2(X)$, and therefore $\Psi$ is smooth.

  Thus $\Phi$ and $\Psi$ are diffeomorphisms, and by \Cref{prop:external-right-Ix}, 
  we obtain an isomorphism
  \[
    \external{0}{X}\cong L^{\infty}\!\left(\prod_{n\in\mathbb{N}}\mathbb{R},\mathbb{R}\right).
  \]
  To finish the argument, we identify the space on the right.
  For each $i\in\mathbb{N}$, let $e_i\in \prod_{n\in\mathbb{N}}\mathbb{R}$ denote the vector whose $i$-th component is $1$ and all other components are $0$ (in particular $e_i\in\bigoplus_{n\in\mathbb{N}}\mathbb{R}$),
  and let $e_i^*\colon \prod_{n\in\mathbb{N}}\mathbb{R}\to\mathbb{R}$ be the $i$-th coordinate projection.
  The theorem follows from the next \Cref{prop:smooth-linear-finite-support}.
\end{proof}

\begin{prop}\label{prop:smooth-linear-finite-support}
  The canonical linear map
  \[
    \bigoplus_{n\in\mathbb{N}}\mathbb{R} \longrightarrow
    L^{\infty}\!\left(\prod_{n\in\mathbb{N}}\mathbb{R},\mathbb{R}\right);\quad
    (a_i)_{i}\longmapsto \sum_{i\in\mathbb{N}} a_i e_i^*,
  \]
  is an isomorphism of vector spaces. In particular, every smooth linear map
  $D\colon \prod_{n\in\mathbb{N}}\mathbb{R}\to\mathbb{R}$ is a finite linear combination of the
  coordinate projections.
\end{prop}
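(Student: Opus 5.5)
The plan is to check well-definedness and injectivity directly and to put all the work into surjectivity. Each coordinate projection $e_i^*$ is smooth, because by \Cref{def:product} the composite $e_i^*\circ q$ is smooth for every plot $q$ of $\prod_n\mathbb{R}$. Finite linear combinations of smooth linear maps into $\mathbb{R}$ are again smooth, so the map lands in $L^{\infty}$. Injectivity is immediate: evaluating $\sum_i a_i e_i^*$ at $e_j$ returns $a_j$.

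For surjectivity, let $D$ be smooth and linear and set $a_i:=D(e_i)$. I first show that only finitely many $a_i$ are nonzero. Suppose instead that $a_{i_k}\neq 0$ for an infinite sequence $i_1<i_2<\cdots$. Choose a smooth bump function $\varphi\colon\mathbb{R}\to\mathbb{R}$ supported in $(-1,1)$ with $\varphi(0)=1$. Define a plot $q\colon\mathbb{R}\to\prod_n\mathbb{R}$ whose $i_k$-th coordinate is $q_{i_k}(t)=c_k\varphi(2^k(t-1/k))$ and whose other coordinates are $0$; in fact I only need each coordinate to be some smooth function of $t$. Since the product diffeology only asks that every coordinate be smooth, any family of smooth coordinate functions is a plot, with no uniformity required. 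This makes a much simpler choice available. Take the one-parameter line $q(t)=t\,v$ for a fixed vector $v=(v_n)$; this is a plot, but linearity already gives $D(tv)=tD(v)$, so it detects nothing. The better approach is to exploit non-uniformity directly: take $q_n(t)=\psi_n(t)$, where the $\psi_n$ are smooth functions vanishing near $0$, to be chosen, and then derive a contradiction from the smoothness of $D\circ q$.

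The key construction is as follows. Take $q_{i_k}(t)=a_{i_k}^{-1}\,\chi_k(t)$, where $\chi_k$ is a smooth function that is identically $0$ for $t\le 1/(k+1)$ and identically $1$ for $t\ge 1/k$, and set $q_n=0$ for all other $n$. This is a plot. For $t\in[1/(k+1),1/k)$, the vector $q(t)$ is the sum of a finitely supported part and a tail. The finitely supported part consists of the coordinates $i_1,\dots,i_{k-1}$, each with value $a_{i_j}^{-1}$, together with a partial contribution at $i_k$. The tail is zero on coordinates $i_j$ with $j>k$. So for such $t$ we have $q(t)\in\bigoplus\mathbb{R}$, and by linearity $D(q(t))=(k-1)+\chi_k(t)$. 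Hence $D\circ q$ tends to $+\infty$ as $t\to 0^+$, while $q(0)=0$ gives $D(q(0))=0$. This contradicts the continuity, and hence the smoothness, of $D\circ q$. Therefore $a_i=0$ for all but finitely many $i$.

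Put $D':=D-\sum_i a_ie_i^*$. Then $D'$ is smooth and linear, and it vanishes on $\bigoplus_n\mathbb{R}$. It remains to show that $D'=0$, and I expect this to be the main obstacle, since an arbitrary linear map killing the direct sum need not vanish. Fix $v\in\prod_n\mathbb{R}$ and choose smooth $\chi_k$ as above with $0\le\chi_k\le 1$. Define the plot $q(t)_n=\chi_n(t)v_n$ for $t>0$, with $q(t)_n=0$ near $t\le0$; each coordinate is smooth. For $t\ge 1$ this gives $q(t)=v$. For $t$ in $(0,1)$ only finitely many coordinates are nonzero, because $\chi_n(t)=0$ once $1/(n+1)\ge t$. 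So $D'(q(t))=0$ for all $t<1$. Continuity of $D'\circ q$ at $t=1$ then gives $D'(v)=D'(q(1))=\lim_{t\to1^-}D'(q(t))=0$. Hence $D=\sum_i a_ie_i^*$, which proves surjectivity. The same "switch-on coordinates one at a time" trick drives both steps; the only care needed is to choose the $\chi_k$ so that each coordinate function is smooth on all of $\mathbb{R}$, which holds because each is a single smooth step function.
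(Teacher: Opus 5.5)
Your route is different from the paper's and can be made to work, but as written the support analysis of your step-function plots is reversed in both steps, and the step showing $D'=0$ proves nothing. With $\chi_k\equiv 0$ on $t\le 1/(k+1)$ and $\chi_k\equiv 1$ on $t\ge 1/k$, fix $t\in(0,1)$. Then $\chi_j(t)=0$ only for the finitely many $j\le 1/t-1$, and $\chi_j(t)=1$ for every $j\ge 1/t$. So for small $t>0$ the vector $q(t)$ is not finitely supported; it is $q(t)-q(1)$ that is finitely supported.

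In your finiteness step, put $w:=q(1)$. For $t\in[1/(k+1),1/k)$ you actually have $q(t)=w-\sum_{j=1}^{k}a_{i_j}^{-1}e_{i_j}+\chi_k(t)\,a_{i_k}^{-1}e_{i_k}$. Hence $D(q(t))=D(w)-k+\chi_k(t)$, not $(k-1)+\chi_k(t)$. The contradiction survives, since $D\circ q\to-\infty$ as $t\to 0^+$ while $D(q(0))=0$, but only after this correction, which uses the value of $D$ on the non-finitely-supported vector $w$. In the step showing $D'=0$, the same reversal gives $D'(q(t))=D'(v)$ for all $t\in(0,1)$. So continuity at $t=1$ yields only the tautology $D'(v)=D'(v)$.

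The repair is to read off the conclusion at $t=0$. Since $q\equiv 0$ on $(-\infty,0]$ and $D'\circ q\equiv D'(v)$ on $(0,1)$, continuity of $D'\circ q$ at $0$ forces $D'(v)=0$. Alternatively, reverse the step functions, taking $\chi_k\equiv 1$ for $t\le 1/(k+1)$ and $\chi_k\equiv 0$ for $t\ge 1/k$. Then your finite-support claims become true and $D(q(t))=(k-1)+\chi_k(t)\to+\infty$, but now $q(0)=w$ (resp.\ $q(0)=v$), and again you argue at $t=0$. With either fix your two-step proof is valid.

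The paper instead works in one step. Assuming $D$ is nonzero on every $\ker\pi_N$, it chooses $x_k\in\ker\pi_k$ with $D(x_k)=k$. It glues these with disjointly supported bumps $\varphi_k$ around $t_k=2^{-3k}\to 0$; the condition $x_k\in\ker\pi_k$ makes each coordinate a finite sum. Then $p(t_k)=x_k$ and $p(0)=0$ give the unboundedness. Vanishing on some $\ker\pi_N$ covers at once both your finiteness statement and your lemma that a smooth linear functional killing $\bigoplus_n\mathbb{R}$ is zero. Your route has the merit of isolating that lemma, while the paper's is shorter. The abandoned bump and line constructions at the start of your surjectivity argument can simply be deleted.
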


\begin{proof}
Since the injectivity is straightforward, we show that this map is surjective.
Let $D\in L^{\infty}\!\left(\prod_{n\in\mathbb N}\mathbb R,\mathbb R\right)$ be a smooth linear map.
For each $N\in\mathbb N$, let
\[
  \pi_N\colon \prod_{n\in\mathbb N}\mathbb R \longrightarrow \mathbb R^N
\]
be the projection onto the first $N$ coordinates.
We claim that there exists $N\in\mathbb N$ such that
\[
  D(x)=0 \qquad \text{for all } x\in\ker(\pi_N)
  =\{x\in\prod_{n\in\mathbb N}\mathbb R \mid x_1=\cdots=x_N=0\}.
\]
Suppose otherwise.
Then for every $N\in\mathbb N$ there exists $x_N\in\ker(\pi_N)$ with $D(x_N)\neq 0$.
Replacing $x_N$ by a scalar multiple, we may assume $D(x_N)=N$.
Set $U=(-1,1)\subset\mathbb R$.
Define
\[
  t_k=2^{-3k},\qquad
  I_k=\bigl(2^{-3k-1},\,2^{-3k+1}\bigr)\subset U
  \qquad (k\in\mathbb N).
\]
Then $t_k\to 0$ and the intervals $I_k$ are pairwise disjoint with $t_k\in I_k$.
For each $k\in\mathbb N$, choose a smooth function $\varphi_k\colon U\to\mathbb R$ such that
\[
  \operatorname{supp}(\varphi_k)\subset I_k
  \qquad\text{and}\qquad
  \varphi_k(t_k)=1.
\]
Define a map $p\colon U\to\prod_{n\in\mathbb N}\mathbb R$ by
\[
  (p(u))_n = \sum_{k=1}^{n} (x_k)_n\,\varphi_k(u)
  \qquad (n\in\mathbb N).
\]
For each fixed $n$, the sum is finite, hence the component $u\mapsto (p(u))_n$ is smooth.
Therefore $p$ is a plot of $\prod_{n\in\mathbb N}\mathbb R$ equipped with the product diffeology.

For each $k\in\mathbb N$, we have $p(t_k)=x_k$.
Indeed, since the supports of the $\varphi_\ell$ are pairwise disjoint,
we have $\varphi_\ell(t_k)=0$ for $\ell\neq k$, so
\[
  (p(t_k))_n =
  \begin{cases}
    (x_k)_n & \text{if } k\leq n,\\
    0       & \text{if } k>n.
  \end{cases}
\]
If $k>n$, then $(x_k)_n=0$ since $x_k\in\ker(\pi_k)$.
Hence $(p(t_k))_n=(x_k)_n$ for all $n\in\mathbb N$, and thus $p(t_k)=x_k$.
Consequently,
\[
  (D\circ p)(t_k)=D(p(t_k))=D(x_k)=k.
\]
Since $t_k\to 0$ in $U$ while $(D\circ p)(t_k)=k\to\infty$,
the map $D\circ p\colon U\to\mathbb R$ is not continuous at $0$.
This contradicts the smoothness of $D\circ p$.
Therefore the claim holds for some $N\in\mathbb N$.

Fix such an $N$.
If $\pi_N(x)=\pi_N(y)$, then $x-y\in\ker(\pi_N)$ and hence $D(x)=D(y)$.
Thus $D$ factors through $\pi_N$ and there exists a unique linear map
$\bar D\colon \mathbb R^N\to\mathbb R$ such that $D=\bar D\circ \pi_N$.
Since $\mathbb R^N$ is finite-dimensional, there exist $c_1,\dots,c_N\in\mathbb R$ such that
\[
  \bar D(u_1,\dots,u_N)=\sum_{i=1}^N c_i u_i.
\]
Since $x-(x_1,\dots,x_N,0,\dots)\in\ker(\pi_N)$, we have
\[
  D(x)=D(x_1,\dots,x_N,0,\dots)=\sum_{i=1}^N c_i x_i=\sum_{i=1}^N c_i e_i^*(x).
\]
Thus $D=\sum_{i=1}^N c_i e_i^*$, which proves surjectivity.
\end{proof}

    

\begin{remark}\label{rem:prodRn}
  Let $Y=\prod_{n\in\mathbb N}\mathbb R$ be equipped with the product diffeology.
  Motivated by \Cref{prop:smooth-linear-finite-support}, we expect an isomorphism of
  diffeological vector spaces
  \[
    I_0(Y)/I_0(Y)^2 \;\cong\; \bigoplus_{n\in\mathbb N}\mathbb R,
  \]
  where the right-hand side carries the fine diffeology; see \cite[3.7]{IZ} for its
  definition and basic properties.
  If this holds, then both $\hat T_0(Y)$ and $\hat T^R_0(Y)$ identify with
  \[
    L^{\infty}\!\left(\bigoplus_{n\in\mathbb N}\mathbb R, \mathbb{R}\right)
    \;=\;
    L\!\left(\bigoplus_{n\in\mathbb N}\mathbb R, \mathbb{R}\right)
    \;\cong\;
    \prod_{n\in\mathbb N}\mathbb R,
  \]
  and in particular $\hat T_0(Y)=\hat T^R_0(Y)$.
  We do not include a proof, since it is not used in this paper and would require a separate and detailed analysis. 
\end{remark}

\section*{Acknowledgments}
    I would like to express my sincere gratitude to my supervisor, Takuya Sakasai, for his valuable guidance and continuous support throughout this research. 
    I am also grateful to Katsuhiko Kuribayashi for many insightful discussions and helpful advice. 
    I thank Toshiyuki Kobayashi for his generous support and encouragement as my supporting supervisor in the WINGS-FMSP program.  
    This work was supported by JSPS Research Fellowships for Young Scientists and KAKENHI Grant Number JP24KJ0881. 
    Lastly, I would like to acknowledge the WINGS-FMSP program for its financial support.

\bibliographystyle{plain}
\bibliography{nonsmoothop}



\end{document}